\documentclass[12pt]{amsart}
\usepackage{amssymb,amsmath,amsthm}
\usepackage{a4}
\usepackage{color}
\usepackage[colorlinks=true,
linkcolor=webgreen,
filecolor=webbrown,
citecolor=webgreen]{hyperref}
\definecolor{webgreen}{rgb}{0,0,1}
\definecolor{recrown}{rgb}{1,.2,.6}
\usepackage{fullpage}

\begin{document}
\newtheorem{theorem}{Theorem}
\newtheorem{corollary}[theorem]{Corollary}
\newtheorem{lemma}[theorem]{Lemma}
\theoremstyle{definition}
\newtheorem*{example}{Examples}
\newtheorem{conjecture}[theorem]{Conjecture}
\newtheorem{thmx}{\bf Theorem}
\renewcommand{\thethmx}{\text{\Alph{thmx}}}
\newtheorem{lemmax}{Lemma}
\renewcommand{\thelemmax}{\text{\Alph{lemmax}}}
\hoffset=-0cm
\theoremstyle{definition}
\newtheorem*{definition}{Definition}
\theoremstyle{remark}
\newtheorem*{remark}{\bf Remark}
\title{\bf On a factorization result of \c{S}tef\u{a}nescu}
\author{Sanjeev Kumar$^{\dagger}$}
\address{$~^\ddagger$ Department of Mathematics, SGGS College, Sector-26, Chandigarh-160019, India}
\author{Jitender Singh$^{\ddagger}$}
\address{$~^\dagger$ Department of Mathematics, Guru Nanak Dev University, Amritsar-143005, India}
\subjclass[2010]{Primary 30C10; 12E05; 11C08}
\date{}
\maketitle
\parindent=0cm
\footnotetext[3]{sanjeev\_kumar\_19@yahoo.co.in}
\footnotetext[2]{$^{,*}$Corresponding author: jitender.math@gndu.ac.in; sonumaths@gmail.com

\textbf{Keywords:} Polynomials; Factorization; Irreducibility; Discrete valuation domains
}
\begin{abstract}
In this article, some factorization properties of polynomials over discrete valuation domains are elucidated. These properties along with the notion of Newton index of a polynomial leads to a generalization of the main result proved by \c{S}tef\u{a}nescu [`On the factorization of polynomials over discrete valuation domains', \emph{Versita} \textbf{22}:1  (2014), 273--280].
\end{abstract}
The augean work of identifying irreducible polynomials in a prescribed domain is a classical yet exciting theme as is evident from the fact that the prolific irreducibility criteria due to Sch\"onemann \cite{Sc} and Eisenstein \cite{E} have witnessed exquisite extensions and generalizations for  decades. Recently, Weintraub \cite{W} generalized Eisenstein's criterion and also provided a correction to the false claim made by Eisenstein himself \cite{E}. However, one of the earliest known generalization is credited to Dumas \cite{Du}, who generalized the Sch\"onemann-Eisenstein  criteria by establishing an intimate connection between the valuation theoretic properties of the polynomials having integer coefficient and the associated geometric properties exhibited by the Newton polygons.

Let $(R,v)$ be a discrete valuation domain  and $f=a_0+a_1x+\cdots+a_nx^n \in {R}[x]$ be a nonconstant polynomial. For any index $i\in \{0,1,\ldots,n-1\}$, let $m_i(f)$ be the slope of the line segment joining the points $(n,v(a_n))$ and $(i,v(a_i))$, that is,
\begin{eqnarray*}
m_i(f)&=& \frac{v(a_n)-v(a_i)}{n-i}.
\end{eqnarray*}
The Newton index $e(f)$ of $f$ is then defined to be the number $\max_{1\leq i\leq n-1}\{m_i(f)\}$ and the following identity
\begin{eqnarray*}
e(f_1f_2)=\max\{e(f_1),e(f_2)\} ~\text{for all}~ f_1,~f_2\in {R}[x],
\end{eqnarray*}
is well acknowledged. In \cite[Theorem 2.2]{S}, some factorization properties for polynomials over a discrete valuation ring $(R,v)$ were investigated  and the following main result was established.
\begin{thmx}\label{thA}
Let $(R,v)$ be a discrete valuation domain and $f=a_0+a_1x+\cdots+a_nx^n \in R[x]$. If there exists an index $s\in \{0,\ldots,n-1\}$ such that
\begin{enumerate}
    \item[(a)] $v(a_n)=0$; $m_i(f)<m_s(f)$ for all $i=0,1,\ldots,n-1$, $i\neq s$,
    \item[(b)] 
    $n(n-s)(m_0(f)-m_s(f))=-1$,
\end{enumerate}
then the polynomial $f$ is either irreducible in $R[x]$, or has a factor whose degree is a multiple of $n-s$.
\end{thmx}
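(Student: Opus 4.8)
The plan is to analyze an arbitrary nontrivial factorization $f=gh$ in $R[x]$ and to show that one of $g,h$ has degree divisible by $n-s$; if $f$ is irreducible there is nothing to prove. One may assume $n-s\geq 2$, since when $n-s=1$ every factor trivially has degree a multiple of $n-s$. First I would record the elementary consequences of the hypotheses. Clearing denominators in (b) gives
\[
(n-s)\,v(a_0)-n\,v(a_s)=1,
\]
whence $s\neq 0$ (otherwise the left-hand side is $0$), $a_0\neq 0$ (otherwise $m_0(f)=-\infty$), and $\gcd(n-s,n)=1$. Since $v(a_n)=0$, the coefficient $a_n$ is a unit of $R$; consequently, in any factorization $f=gh$ the leading coefficients of $g$ and of $h$ are units, so $\deg g,\deg h\geq 1$, and $a_0=g(0)h(0)\neq0$ forces $g(0),h(0)\neq0$. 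Finally, (a) gives $e(f)=m_s(f)$.

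Next, write $g=\sum_{i=0}^{d}b_ix^i$, $h=\sum_{j=0}^{n-d}c_jx^j$ with $1\leq d\leq n-1$, so that $v(b_d)=v(c_{n-d})=0$ and $b_0,c_0\neq 0$. By the identity $e(f_1f_2)=\max\{e(f_1),e(f_2)\}$ — equivalently, because the edges of the Newton polygon of a product are those of the factors rearranged by slope — every edge of the Newton polygon of $g$ and of $h$ has slope at most $e(f)=m_s(f)$. Since $m_0(g)=-v(b_0)/d$ is the slope of the chord of the Newton polygon of $g$ from $(0,v(b_0))$ to $(d,0)$, hence a length-weighted average of its edge slopes, we get $m_0(g)\leq m_s(f)$, and similarly $m_0(h)\leq m_s(f)$. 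I would then introduce
\[
A=d(n-s)\bigl(m_s(f)-m_0(g)\bigr),\qquad B=(n-d)(n-s)\bigl(m_s(f)-m_0(h)\bigr).
\]
Substituting $m_0(g)=-v(b_0)/d$, $m_0(h)=-v(c_0)/(n-d)$ and $m_s(f)=-v(a_s)/(n-s)$ turns these into $A=(n-s)v(b_0)-d\,v(a_s)$ and $B=(n-s)v(c_0)-(n-d)\,v(a_s)$; thus $A$ and $B$ are \emph{nonnegative integers}, and $A+B=(n-s)v(a_0)-n\,v(a_s)=1$ by the displayed form of (b). Hence $\{A,B\}=\{0,1\}$.

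After possibly interchanging $g$ and $h$, assume $A=1$ and $B=0$. From $B=0$ we get $m_0(h)=m_s(f)$; being an average of edge slopes each $\leq m_s(f)$, this forces every edge of the Newton polygon of $h$ to have slope exactly $m_s(f)$, so that polygon is a single segment joining the lattice points $(0,v(c_0))$ and $(n-d,0)$, of slope $-v(a_s)/(n-s)$. From $A=1$, that is $(n-s)v(b_0)-d\,v(a_s)=1$, we read off $\gcd(n-s,v(a_s))=1$, so writing $m_s(f)=-p/q$ in lowest terms gives $q=n-s$. A segment between lattice points whose slope is $-p/q$ in lowest terms has horizontal length divisible by $q$; applied to the Newton polygon of $h$ this yields $(n-s)=q\mid(n-d)=\deg h$. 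Thus $h$ is a factor of $f$ of degree a positive multiple of $n-s$; and if instead $A=0,B=1$, the same reasoning with $g,h$ interchanged shows $(n-s)\mid\deg g$. This proves the theorem.

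The step I expect to be the crux is obtaining $A+B=1$, as this is what converts the opaque numerical hypothesis (b) into the clean dichotomy $\{A,B\}=\{0,1\}$. It hinges on two facts being simultaneously available: that $v(a_n)=0$ forces $v(b_d)=v(c_{n-d})=0$, which is exactly what makes $m_0(g)$ and $m_0(h)$ equal to $-v(b_0)/d$ and $-v(c_0)/(n-d)$ with \emph{integer} numerators; and that the Newton-index identity (equivalently the multiplicativity of Newton polygons) yields $m_0(g),m_0(h)\leq m_s(f)$, so that $A,B\geq 0$. Everything after that is elementary arithmetic with slopes and greatest common divisors.
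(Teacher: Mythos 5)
Your proof is correct and is essentially the argument the paper gives (for its more general Theorem~\ref{th1}, of which Theorem~\ref{thA} is the case $d_s=\gcd(n-s,v(a_s))=1$, forced here by hypothesis (b)): your quantities $A$ and $B$ are exactly the paper's $\kappa$ and $d_s-\kappa$, obtained from the same Newton-index inequality $e(f)\geq e(f_i)$ and the same identity $A+B=(n-s)v(a_0)-nv(a_s)=1$. The only cosmetic difference is that you finish the case $B=0$ with a Newton-polygon lattice-segment argument, whereas the paper concludes directly from $(n-s)v(c_0)=(n-d)v(a_s)$ and $\gcd(n-s,v(a_s))=1$.
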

The hypothesis (a) in Theorem \ref{thA} along with the additional requirement of coprimality of $v(a_s)$ and $n-s$ yields the factorization result of Weintraub \cite{W}, that is, if $f(x)=g(x)h(x)$ is any factorization of the polynomial $f$ in $R[x]$, then $\min\{\deg g, \deg h\}\leq s$ or equivalently
\begin{eqnarray*}
\max\{\deg g, \deg h\}\geq (n-s).
\end{eqnarray*}
The hypothesis (b) in Theorem \ref{thA} enables us to proceed one step ahead of the aforementioned inequality by asserting $n-s$ as a divisor of $\deg g$ or $\deg h$.

However, incase if $R=\mathbb{Z}$ and $v=v_p$ the associated $p$-adic discrete  valuation of $\mathbb{Z}$ with $s=0$, then the assumption (b) in the hypothesis of Theorem \ref{thA} becomes void. So, in this particular context, an amelioration of the above result is necessary. In this article, we modify the hypothesis (b) of Theorem \ref{thA} in order to allow $s=0$ without ambiguity and then prove a mild generalization, which we hope paves the way for a significant criterion that rectifies an error Eisenstein made himself, that too with a weaker hypothesis. More precisely, we have the following result.
\begin{theorem}\label{th1}
Let $(R,v)$ be a discrete valuation domain and $f=a_0+a_1x+\ldots+a_nx^n \in R[x]$ with $v(a_n)=0$. Let there be an index $s\in \{0,\ldots,n-1\}$ such that
\begin{enumerate}
    \item[(a)]\label{a} $m_i(f)<m_s(f)$ for all $i=0,1,\ldots,n-1$, $i\neq s$,
    \item[(b)]\label{b}   $d_s=\gcd(n-s,v(a_s))$ satisfies
    \begin{eqnarray*}
  (-d_s) =
    \begin{cases}
    n(n-s)(m_0(f)-m_s(f)),&~\text{if}~ s\neq 0;\\
     -1,&~\text{if}~ s=0.
    \end{cases}
    \end{eqnarray*}
\end{enumerate}
Then any factorization $f(x)=f_1(x)f_2(x)$ of $f$ in ${R}[x]$ has a factor whose degree is a multiple of $(n-s)/d_s$. In particular, if $s=0$, then $f$ is irreducible.
\end{theorem}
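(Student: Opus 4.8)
The plan is to examine an arbitrary factorization $f=f_1f_2$ in $R[x]$ via the Newton polygon $N(f)$ and Dumas's theorem \cite{Du}, that $N(f)$ is obtained by juxtaposing the edges of $N(f_1)$ and $N(f_2)$ in order of increasing slope. We may assume $\deg f_1,\deg f_2\ge1$, since otherwise the constant factor has degree $0$, trivially a multiple of $(n-s)/d_s$. From $v(a_n)=0=v(\mathrm{lc}(f_1))+v(\mathrm{lc}(f_2))$ with both summands $\ge0$ we get that $\mathrm{lc}(f_1),\mathrm{lc}(f_2)$ are units, so each $N(f_i)$ runs from $(0,v(f_i(0)))$ to $(\deg f_i,0)$ and $f_i(0)\ne0$.

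Put $\lambda=v(a_s)/(n-s)$, $w=(n-s)/d_s$, $v'=v(a_s)/d_s$, so that $\lambda=v'/w$ with $\gcd(v',w)=1$. The first step is to observe that hypothesis (a) says exactly that the segment from $(s,v(a_s))$ to $(n,0)$ is the rightmost edge of $N(f)$, of maximal slope $-\lambda$, and that every other edge of $N(f)$ is strictly steeper (slope $<-\lambda$). By Dumas's theorem each edge of $N(f_i)$ is one of the edges of $N(f)$, hence also has slope $\le-\lambda$, so the total descent of $N(f_i)$ obeys $v(f_i(0))\ge\lambda\deg f_i$. Therefore the quantity $N_i:=w\,v(f_i(0))-v'\deg f_i$ is a non-negative integer (since $v(f_i(0)),\deg f_i\in\mathbb Z$ and $w\lambda=v'$), and $N_i=0$ yields $v(f_i(0))=\lambda\deg f_i\in\mathbb Z$, hence $w\mid\deg f_i$ because $\gcd(v',w)=1$.

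The crux is one identity: summing the defining relations gives $N_1+N_2=w\,v(a_0)-v'\,n$, and a short computation shows hypothesis (b) is equivalent to $w\,v(a_0)-v'n=1$ when $s\ne0$, while for $s=0$ it gives $d_0=1$, hence $w=n$, $v'=v(a_0)$ and $w\,v(a_0)-v'n=0$. In either case $N_1+N_2\le1$, so at least one $N_i$ vanishes, and the corresponding factor $f_i$ has $\deg f_i$ divisible by $w=(n-s)/d_s$; as the factorization was arbitrary, the main claim follows. When $s=0$ both $N_i$ vanish, forcing $n\mid\deg f_1$ and $n\mid\deg f_2$, which is impossible since $\deg f_1+\deg f_2=n$ with both positive; so $f$ has no factorization into two non-constant polynomials, and---any constant factor of $f$ dividing the unit $a_n$ being itself a unit---$f$ is irreducible.

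The computational parts (rewriting (b) as $w\,v(a_0)-v'n\in\{0,1\}$, and deducing $w\mid\deg f_i$ from $v(f_i(0))=\lambda\deg f_i$) are routine. The point I expect to require the most care is the first step: verifying that hypothesis (a) really does force the segment from $(s,v(a_s))$ to $(n,0)$ to be a genuine edge of $N(f)$, with $(s,v(a_s))$ an honest vertex and no remaining edge of slope $\ge-\lambda$, and then invoking Dumas's theorem cleanly to pass to the $N(f_i)$. It is precisely the assumption $v(a_n)=0$ that anchors both $N(f_i)$ at height $0$ on the right, so that the left-hand heights $v(f_i(0))$ sum to $v(a_0)$ and the identity above is available.
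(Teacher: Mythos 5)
Your proof is correct and is essentially the paper's own argument recast in Newton--polygon language: your quantities $N_1,N_2$ are exactly the paper's integer $\kappa=((n-s)/d_s)x_2-k_2(y_s/d_s)$ and its counterpart for the other factor, your bound $N_i\ge 0$ is the paper's inequality $y_sk_i\le (n-s)x_i$ obtained from $e(f)\ge e(f_i)$ (itself a consequence of Dumas's theorem), and your identity $N_1+N_2=1$ is the paper's chain $0\le\kappa\le 1$ together with its two cases. The only cosmetic difference is that you handle $s=0$ uniformly within the same computation, whereas the paper disposes of that case by citing Eisenstein's criterion.
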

The examples below justify that there exist polynomials whose factorization properties can be deduced using Theorem \ref{th1} wherein Theorem \ref{thA} fails to be applicable.
\begin{example} Let $p$ be a prime number and $v=v_p$, the discrete valuation on $\mathbb{Z}$.

\textbf{(1).}  Consider the polynomial
\begin{eqnarray*}
X(x)=(a_0+p^2(p-1)a_2x^2)p^{n-2}+p^{n-3}a_1x+a_nx^{n}\in \mathbb{Z}[x],~n\geq 5,
\end{eqnarray*}
where $n$ is odd, $a_i>0$  and $v_p(a_i)=0$ for $i\in\{0,1,2,n\}$. Taking $s=1$, we observe that
\begin{eqnarray*}
m_i(X)-m_1(X)\leq -\frac{n-2}{n}+\frac{n-3}{n-1}=\frac{-2}{n(n-1)}<0,
\end{eqnarray*}
and also $n(n-1)(m_0(X)-m_1(X))=-2=-d_1$. By Theorem \ref{th1} either $X$ is irreducible or $X$ has a factor whose degree is a multiple of $(n-1)/2$.

\textbf{(2).} The polynomial
\begin{eqnarray*}
Y(x)=(1+p^2 x^3)p^{n-2}+p^{n-4}x^2+x^{n}\in \mathbb{Z}[x],~n\geq6,
\end{eqnarray*}
where $n$ is even  satisfies the hypothesis of Theorem \ref{th1} with $s=2$, $a_0=p^{n-2}$, $a_2=p^{n-4}$, $a_3=p^{n+1}$; $a_i=0$ for $i\not\in\{0,2,3,n\}$;
$m_i(Y)-m_2(Y)\leq \frac{-2}{n(n-2)}<0$, and $n(n-1)(m_0(Y)-m_1(Y))=-2=-d_1$.
So, either $Y$ is irreducible or $Y$ has a factor of degree equal to a multiple of $(n-2)/2$.

\textbf{(3).} Let $n>2$ and $d$ be a divisor of $n-1$. Now consider the polynomial
\begin{eqnarray*}
Z(x,y)=a_0(x)+a_1(x)y+y^{n}\in \mathbb{Z}[x,y],
\end{eqnarray*}
where each of $a_0(x)$ and $a_1(x)$ is an irreducible polynomial of degree $d$ in $\mathbb{Z}[x]$. For nonzero  $g\in \mathbb{Z}[x]$, define  $v(g)=-\deg(g)$ and $v(0)=\infty$.  Taking $s=1$, we have
\begin{eqnarray*}
m_i(Z)-m_1(Z)\leq \frac{d}{n}-\frac{d}{n-1}=\frac{-d}{n(n-1)}<0,
\end{eqnarray*}
and also $n(n-1)(m_0(Z)-m_1(Z))=-d=-\gcd(v(a_0(x)),n-1)$. By Theorem \ref{th1}, $Z$ has a factor (with respect to $y$) whose  degree is a multiple of $(n-1)/d$. This example can also be viewed in $K[x,y]$ where $K$ is any field of characteristic zero, and $v$ is the discrete degree valuation on $K$.
\end{example}
\begin{proof}[\bf Proof of Theorem \ref{th1}.] For $s=0$, Theorem \ref{th1} reduces to the classical Eisenstein's irreducibility criterion. So, let $s>0$. Let $k_i=\deg f_i$, $x_i=v(f_i(0))$ for $i=1,2$. Let $y_j=v(a_j)$ for each $j=0,1,\ldots,n-1$. With these notations, the hypothesis (b) in the theorem becomes
\begin{equation}\label{1}
 (n-s)y_0-ny_s=d_s.
\end{equation}
By  hypothesis (a) in the theorem, we have $e(f)=-v(a_s)/(n-s)=-y_s/(n-s)$ and since $e(f)\geq e(f_i)$, we deduce that
\begin{eqnarray*}
-y_s/(n-s)=e(f) \geq e(f_i)\geq  {-v(f_i(0))}/{k_i}=-x_i/k_i.
\end{eqnarray*}
So we must have $y_sk_i\leq (n-s)x_i$ for $i=1,2$. We also have
\begin{eqnarray*}
y_0=v(f(0))=v(f_1(0))+v(f_2(0))=x_1+x_2;~n=k_1+k_2.
\end{eqnarray*}
Using  these in \eqref{1}, we get that
\begin{equation*}\label{2}
0\leq (n-s)x_2-k_2y_s=(n-s)x_2+k_1y_s-n y_s\leq   (n-s)(x_2+x_1)-n y_s=d_s.
\end{equation*}
Consequently, if we let  $\kappa=((n-s)/d_s)x_2-k_2(y_s/d_s)$, then $0\leq \kappa \leq 1$, and so,  either  $\kappa=0$ or $\kappa=1$.

 If $\kappa=0$, then in view of (b), the number ${(n-s)}/{d_s}$ must divide $k_2$, and so, $k_2$ must be a multiple of ${(n-s)}/{d_s}$.
On the other hand, if $\kappa=1$, then
\begin{eqnarray*}
((n-s)/d_s)(y_0-x_1)-(n-k_1)(y_s/d_s)=1,
\end{eqnarray*}
which in view of \eqref{1} gives $((n-s)/d_s)x_1=k_1(y_s/d_s)$, and so, the number ${(n-s)}/{d_s}$ divides $k_1$. In this case, $k_1$ is a multiple of ${(n-s)}/{d_s}$. Thus, in either of the cases, the degree of one of $f_1$ or $f_2$ is a multiple of ${(n-s)}/{d_s}$. This completes the proof.
\end{proof}
Most of the factorization results on polynomials over a discrete valuation domain $(R,v)$ require to take $v(a_s)$ and $n-s$ to be coprime, whenever $s$ is the smallest index for which the minimum  of the quantity $v(a_i)/(n-i)$, $0\leq i\leq n-1$ is $v(a_s)/(n-s)$ (See for example, Jhorar and Khanduja \cite{Jh}).  In the case when $v(a_s)$ and $n-s$ are not coprime, we have the following result.
\begin{lemma}\label{L2}
Let $(R,v)$ be a discrete valuation domain and $f=a_0+a_1x+\ldots+a_nx^n \in R[x]$ with $v(a_n)=0$. If $d_s=\gcd(v(a_s),n-s)>1$, then any factorization $f(x)=g(x)h(x)$ of $f$ in $R[x]$ has  $\max\{\deg g,\deg h\}\geq (n-s)/d_s$.
\end{lemma}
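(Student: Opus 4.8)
The plan is to push everything through the two facts already established in the paper: the multiplicativity $e(f_1f_2)=\max\{e(f_1),e(f_2)\}$ of the Newton index, and the arithmetic rigidity coming from $d_s=\gcd(v(a_s),n-s)$. Throughout, $s$ denotes, as in the preceding discussion, the least index in $\{0,\ldots,n-1\}$ at which $v(a_i)/(n-i)$ attains its minimum. First I would peel off the trivial case: if $(n-s)/d_s=1$ there is nothing to prove, $f$ being nonconstant. So assume $(n-s)/d_s\ge 2$; since $d_s\mid n-s$ this means $d_s<n-s$, which already excludes $v(a_s)=0$ (that would give $d_s=\gcd(0,n-s)=n-s$). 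Hence $v(a_s)\ge 1$, and I write $n-s=d_sq$ and $v(a_s)=d_sw$ with $q=(n-s)/d_s\ge 2$, $w\ge 1$ and $\gcd(w,q)=1$.

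I would dispatch the case $s=0$ on its own, since there the Newton index need not detect the slope $-v(a_0)/n$. When $s=0$, minimality gives $v(a_i)/(n-i)\ge v(a_0)/n$ for $0\le i\le n-1$, which together with $v(a_n)=0$ means the Newton polygon of $f$ is the single segment from $(0,v(a_0))$ to $(n,0)$ of slope $-w/q$; equivalently, in a splitting field of $f$ every root has valuation $w/q$. Hence for any nonconstant factor $g$ of $f$ in $R[x]$, whose leading coefficient has valuation $0$ (it divides $a_n$), the number $v(g(0))$ equals the sum of the valuations of the roots of $g$, namely $(\deg g)\,w/q$; this being a nonnegative integer and $\gcd(w,q)=1$, we get $q\mid\deg g$. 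So $q$ divides the degree of each nonconstant factor, and $\max\{\deg g,\deg h\}\ge q=(n-s)/d_s$. (This is just the classical Eisenstein-Dumas phenomenon.)

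Now suppose $s\ge 1$. Then $s\in\{1,\ldots,n-1\}$, so $v(a_i)/(n-i)$ is minimal over $\{1,\ldots,n-1\}$ at $s$ as well, and therefore $e(f)=-v(a_s)/(n-s)=-w/q$. Take any factorization $f=gh$. We may assume $\deg g\ge 2$ and $\deg h\ge 2$, for otherwise one of the two has degree at least $n-1\ge n-s\ge(n-s)/d_s$ and we are done. By $e(f)=\max\{e(g),e(h)\}$ one factor, say $g=b_0+b_1x+\cdots+b_kx^k$ with $k=\deg g$ and $v(b_k)=0$, satisfies $e(g)=-w/q$; then the definition of the Newton index produces an index $j$ with $1\le j\le k-1$ and $-v(b_j)/(k-j)=-w/q$. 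In particular $b_j\ne 0$, so $v(b_j)$ is a nonnegative integer with $v(b_j)\,q=w\,(k-j)$; as $\gcd(w,q)=1$ this forces $q\mid k-j$, and since $1\le k-j\le k-1$ we conclude $k-j\ge q$, i.e. $\deg g=k>q=(n-s)/d_s$. Hence $\max\{\deg g,\deg h\}\ge(n-s)/d_s$ here too, which finishes the proof.

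I do not expect any genuine obstacle; the steps that require attention are organizational. One must note that a factor's leading coefficient automatically acquires valuation $0$ from $v(a_n)=0$, so that the Newton index of a factor is computed with respect to a valuation-zero leading term exactly as in the hypothesis; that the index $j$ realizing $e(g)$ is genuinely interior, so that $k-j$ is a \emph{positive} multiple of $q$; and that the borderline value $s=0$, where the identity $e(f)=-v(a_0)/n$ can fail, really does have to be handled separately through the one-edge Newton polygon rather than through the Newton index.
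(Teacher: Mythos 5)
Your argument is correct, but it is vastly more elaborate than the one the paper gives, and the comparison is instructive. The paper's proof is a pure degree count that never touches the valuation: if both factors had degree strictly less than $(n-s)/d_s$, then $n=\deg g+\deg h<2(n-s)/d_s\leq 2n/d_s$, which forces $d_s<2$ and contradicts $d_s>1$. In other words, the hypothesis $d_s\geq 2$ makes the target bound $(n-s)/d_s$ at most $n/2$, and in any factorization one factor trivially has degree at least $n/2$. Your route through the one-edge Newton polygon (for $s=0$) and the identity $e(gh)=\max\{e(g),e(h)\}$ together with the coprimality of $w=v(a_s)/d_s$ and $q=(n-s)/d_s$ (for $s\geq 1$) is sound: the divisibility $q\mid(k-j)$ is extracted correctly, the interior index $j$ is genuinely interior, and the edge cases ($\deg\leq 1$ factors, $v(a_s)=0$, the failure of $e(f)$ to see the slope at $i=0$) are all disposed of properly. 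Indeed you prove strictly more than the lemma asserts: for $s=0$ every nonconstant factor has degree divisible by $(n-s)/d_s$, and for $s\geq 1$ some factor has degree at least $(n-s)/d_s+1$. What you lose is economy, and also generality of hypothesis: the paper's two-line argument uses only $0\leq s\leq n-1$ and $d_s>1$, whereas your argument genuinely needs $s$ to be a minimizing index of $v(a_i)/(n-i)$ (as the surrounding discussion intends, but the lemma's statement does not formally require).
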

\begin{proof}
Assume on the contrary that $\max\{\deg g,\deg h\}<(n-s)/d_s$. Then $\deg g<(n-s)/d_s$ and $\deg h<(n-s)/d_s$. Since $f(x)=g(x)h(x)$, we have
\begin{eqnarray*}
n=\deg f=\deg g+\deg h<2(n-s)/d_s\leq 2n/d_s,
\end{eqnarray*}
which yields $d_s<2$, and so, $d_s=1$. This contradicts the hypothesis.
\end{proof}
\begin{theorem}\label{th2}
Let $(R,v)$ be a discrete valuation domain and $f=a_0+a_1x+\ldots+a_nx^n \in R[x]$ with $v(a_n)=0$. Let there be an index $s\in \{0,\ldots,n-1\}$ such that $m_i(f)<m_s(f)$ for all $i=0,1,\ldots,n-1$, $i\neq s$ and $d_s=\gcd(n-s,v(a_s))$. Then any factorization $f(x)=g(x)h(x)$ of $f$ in ${R}[x]$ has $\max\{\deg{g},\deg h\}\geq (n-s)/d_s$.
\end{theorem}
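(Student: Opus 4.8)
The plan is to run a Newton-index argument in the spirit of the proof of Theorem~\ref{th1}, the new point being to read a degree bound directly off the fact that the maximum defining $e(f)$ is \emph{attained}, which removes the need for an arithmetic hypothesis like (b). Two reductions first trim the statement. If one of $f_1,f_2$ is a constant, then $v(a_n)=0$ forces that constant to be a unit, so the other factor has degree $n\ge(n-s)/d_s$ and there is nothing to prove; hence we may assume $\deg f_1,\deg f_2\ge 1$. Secondly, the theorem already splits cleanly: when $d_s>1$ the conclusion is exactly Lemma~\ref{L2} (which, notably, does not use hypothesis (a) at all), and when $d_s=1$ it is precisely the factorization theorem of Weintraub recalled in the introduction, applied to hypothesis (a) together with the coprimality of $v(a_s)$ and $n-s$. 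Either observation proves the theorem, but I would prefer one uniform argument covering all $d_s$ and also $s=0$.

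Set $e=(n-s)/d_s$ and $w=v(a_s)/d_s$, so $\gcd(e,w)=1$. By hypothesis (a) the quantities $m_i(f)$ are strictly maximized at $i=s$, so $e(f)=m_s(f)=-w/e$; by the multiplicativity $e(f_1f_2)=\max\{e(f_1),e(f_2)\}$, one factor, say $f_j$ with $k_j=\deg f_j$, has $e(f_j)=-w/e$. Since $v(a_n)=0$ and $a_n$ is the product of the two leading coefficients, the leading coefficient of $f_j$ has valuation $0$; writing $c_i$ for the coefficient of $x^i$ in $f_j$, this means $e(f_j)$ is the maximum of $-v(c_i)/(k_j-i)$ over $0\le i\le k_j-1$, and this maximum is attained, say at $i=i^\ast$ (with $c_{i^\ast}\ne 0$, since the value $-w/e$ is finite). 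There $-v(c_{i^\ast})/(k_j-i^\ast)=-w/e$, i.e.\ $e\,v(c_{i^\ast})=w\,(k_j-i^\ast)$.

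Now $\gcd(e,w)=1$ forces $e\mid(k_j-i^\ast)$; and $i^\ast\le k_j-1$ gives $k_j-i^\ast\ge 1$, so $k_j-i^\ast$ is a positive multiple of $e$, whence $k_j\ge i^\ast+e\ge e=(n-s)/d_s$. Thus $\max\{\deg f_1,\deg f_2\}\ge (n-s)/d_s$. The step I expect to carry the weight is the divisibility $e\mid(k_j-i^\ast)$: this is where the coprimality built into $d_s$ is spent, in the same way that hypothesis (b) is spent in Theorem~\ref{th1} and the coprimality hypothesis is spent in Weintraub's theorem. The only other care needed is routine bookkeeping around degenerate configurations --- a unit factor (dealt with above), $a_0=0$ (so $m_0(f)=-\infty$, harmless once $s\ne 0$), or $v(a_s)=0$ (so $d_s=n-s$ and the bound is vacuous) --- together with the mild point, already used tacitly in the proof of Theorem~\ref{th1}, that the maximum defining $e(\cdot)$ ranges over indices including $0$, which is what keeps the argument valid when $s=0$.
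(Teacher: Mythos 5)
Your proof is correct. It actually subsumes the paper's own proof: the paper disposes of the case $d_s>1$ by Lemma \ref{L2} and of the case $d_s=1$ by citing Jhorar--Khanduja \cite{Jh} (equivalently, the Weintraub-type bound $\max\{\deg g,\deg h\}\geq n-s$ under coprimality of $v(a_s)$ and $n-s$), which is precisely the two-case observation you make in your second reduction. Your uniform argument is the genuinely different route: setting $e=(n-s)/d_s$ and $w=v(a_s)/d_s$ with $\gcd(e,w)=1$, passing via $e(f_1f_2)=\max\{e(f_1),e(f_2)\}$ to a factor $f_j$ with $e(f_j)=-w/e$, and exploiting that this maximum is \emph{attained} at some index $i^\ast$, so that $e\,v(c_{i^\ast})=w(k_j-i^\ast)$ forces $e\mid k_j-i^\ast\geq 1$ and hence $k_j\geq e$. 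What this buys is a self-contained proof that needs no appeal to \cite{Jh}, treats all values of $d_s$ (and $s=0$) at once, and in the case $d_s=1$ reproves rather than quotes the external bound; the cost is only the routine bookkeeping you already supply (a constant factor is a unit because $v(a_n)=0$; $a_s\neq 0$ is implicit in hypothesis (a); $v(a_s)=0$ makes the bound trivial). You are also right that the paper's displayed definition $e(f)=\max_{1\leq i\leq n-1}m_i(f)$ must be read with the index $i=0$ included, as the paper itself tacitly does in the proof of Theorem \ref{th1} when it invokes $e(f_i)\geq -v(f_i(0))/k_i$.
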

\begin{proof} The theorem in the case $d_s=1$ follows from a result proved in Jhorar and Khanduja \cite[Theorem 1.2]{Jh} for more general domains. On the other hand if $d_s>1$, then  the theorem follows from Lemma \ref{L2}.
\end{proof}

\end{document}